\documentclass[12pt]{amsart}
\usepackage[T1]{fontenc}
\usepackage[utf8]{inputenc}
\usepackage{fourier}
\linespread{1.5}
\usepackage[active]{srcltx}
\usepackage{a4wide}
\usepackage{amsthm,amsfonts,amsmath,mathrsfs,amssymb}
\usepackage{dsfont}
\def\be{\begin{equation}}
\def\ee{\end{equation}}


%
\newtheorem*{completeness*}{Completeness property}
\newtheorem*{theorem*}{Theorem}
\newtheorem{theorem}{Theorem}

\newtheorem*{proposition*}{Proposition}
\newtheorem{lemma}{Lemma}
\newtheorem{corollary}{Corollary}
\theoremstyle{remark}

\newcommand{\nc}{\newcommand}

\newcommand{\C}{{\mathbb C}}
\newcommand{\N}{{\mathbb N}}
\newcommand{\D}{{\mathbb D}}

\nc{\supp}{\operatorname{supp}}
\nc{\dif}{\operatorname{d}} \nc{\im}{\operatorname{i}}
\nc{\Hi}{{\mathscr{H}}^\infty} \nc{\Ht}{{\mathscr{H}}^2}
\nc{\Hone}{{\mathscr{H}}^1} \nc{\ol}{\overline} \nc{\bz}{\mathbf{z}}
\nc{\bw}{\mathbf{w}} \nc{\eps}{\varepsilon}

\allowdisplaybreaks[3]

\begin{document}
\title[GCD sums and complete sets of square-free numbers]{GCD sums and complete sets of square-free numbers}
\author{Andriy Bondarenko}
\address{Department of Mathematical Analysis\\ Taras Shevchenko National University of Kyiv\\
Volody- myrska 64\\ 01033 Kyiv\\ Ukraine}
\address{Department of Mathematical Sciences \\ Norwegian University of Science and Technology \\ NO-7491 Trondheim \\ Norway}

\email{andriybond@gmail.com}
\author[Kristian Seip]{Kristian Seip}
\address{Department of Mathematical Sciences \\ Norwegian University of Science and Technology \\ NO-7491 Trondheim \\ Norway}
\email{seip@math.ntnu.no}
\thanks{Research supported by Grant 227768 of the Research Council of Norway. The first author is a fellow of the European Research Consortium for Informatics and Mathematics (ERCIM) ``Alain Bensoussan'' Fellowship Programme.}
\subjclass[2010]{11C20}
\maketitle

\begin{abstract}
It is proved that
\[
\sum_{k,{\ell}=1}^N\frac{\gcd(n_k,n_{\ell})}{\sqrt{n_k
n_{\ell}}} \ll N\exp\left(C\sqrt{\frac{\log N \log\log\log N}{\log\log N}}\right)
\]
holds for arbitrary integers $1\le n_1<\cdots < n_N$. This bound is essentially better than that found in a recent paper of Aistleitner, Berkes, and Seip and can be improved by no more than removal of the triple logarithm. A certain completeness property of extremal sets of square-free numbers plays an important role in the proof of this result.    \end{abstract}

\section{Introduction}

The study of greatest common divisor (GCD) sums of the form
\begin{equation}\label{gcda}
\sum_{k,\ell=1}^N\frac{(\gcd(n_k,n_{\ell}))^{2\alpha}}{(n_k
n_{\ell})^\alpha}
\end{equation}
was initiated by Erd\H{o}s who inspired G\'{a}l \cite{G} to solve 
a prize problem of the Wiskundig Genoot- schap in Amsterdam concerning the case $\alpha=1$. 
G\'{a}l proved that when $\alpha=1$, $CN(\log \log N)^2$ is an optimal upper bound for \eqref{gcda}, with $C$ an absolute constant independent of $N$ and the distinct positive integers $n_1,...,n_N$. The problem solved by G\'{a}l had been posed by Koksma in the 1930s, based on the observation that such bounds would have implications for the uniform distribution of sequences $(n_k x)$ mod 1 for almost all $x$ \cite{K}. 

The purpose of the present paper is to obtain the following new bound\footnote{Here and in what follows, we assume that $N\ge e^3$ so that $\log\log\log N$ is well defined and positive. We also assume that the implied constant in any relation $L\ll R$ is absolute.} for \eqref{gcda} when $\alpha=1/2$. 
\begin{theorem} \label{maintheorem} There exists an absolute constant $A$ less than 7 such that
\begin{equation}\label{gcdb}
\sum_{k,\ell=1}^N\frac{\gcd(n_k,n_{\ell})}{\sqrt{n_k
n_{\ell}}} \ll N \exp\left(A\sqrt{\frac{\log N \log\log\log N}{\log\log N}}\right)
\end{equation}
for arbitrary integers $1\le n_1<\cdots <n_N$.
\end{theorem}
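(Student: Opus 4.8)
The plan is to bound $\Gamma_{1/2}(N):=\sup\{S(\mathcal M)/N:\ |\mathcal M|=N\}$, where $S(\mathcal M):=\sum_{m,n\in\mathcal M}\gcd(m,n)/\sqrt{mn}$ and $\mathcal M$ ranges over $N$-element sets of positive integers. Adjoining a new element $n_0$ to $\mathcal M$ raises $S(\mathcal M)$ by $2\sum_{m\in\mathcal M}\gcd(m,n_0)/\sqrt{mn_0}+1>0$, and a wasteful large element can always be traded for a small one, so the supremum is attained by some \emph{extremal} set. First I would reduce to square-free $\mathcal M$: exploiting $\gcd(m,n)/\sqrt{mn}=\prod_p p^{-|v_p(m)-v_p(n)|/2}$ one shows it suffices to estimate $S(\mathcal M)$ for square-free $\mathcal M$, and it is exactly the extremal square-free sets to which the completeness property will apply.

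Next I would record the basic estimate for square-free sets built from small primes. If every $n\in\mathcal M$ is a product of primes $p\le y$, encode $n$ by the vertex of $\{0,1\}^{\pi(y)}$ recording which of those primes divide $n$; then $S(\mathcal M)=\mathbf 1_{\mathcal M}^{\mathsf T}B\,\mathbf 1_{\mathcal M}$, where $\mathbf 1_{\mathcal M}$ is the indicator vector of $\mathcal M$ and $B$ is the $2^{\pi(y)}\times 2^{\pi(y)}$ Gram matrix $\bigotimes_{p\le y}\bigl(\begin{smallmatrix}1 & p^{-1/2}\\ p^{-1/2}&1\end{smallmatrix}\bigr)$ (equivalently, $S(\mathcal M)$ is the Poisson integral at the point $(p^{-1/2})_p$ of $|\sum_{n\in\mathcal M}\chi(n)|^2$, with $\chi$ the Bohr lift). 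Since $B$ is positive semidefinite with $\|B\|_{\mathrm{op}}=\prod_{p\le y}(1+p^{-1/2})$,
\[
S(\mathcal M)\ \le\ |\mathcal M|\prod_{p\le y}\bigl(1+p^{-1/2}\bigr)\ \le\ |\mathcal M|\exp\Bigl(\sum_{p\le y}p^{-1/2}\Bigr),\qquad
\sum_{p\le y}p^{-1/2}\le\frac{(2+o(1))\sqrt y}{\log y}
\]
by the prime number theorem. So everything comes down to showing that an extremal square-free $\mathcal M$ of cardinality $N$ is supported on primes $p\le y$ with $y$ as small as possible; note that $y\asymp\log N\log\log N$ — which is forced anyway by $\pi(y)\ge\log_2 N$ — would already give $\Gamma_{1/2}(N)\ll\exp\bigl(c\sqrt{\log N/\log\log N}\bigr)$, matching the lower bound furnished by the divisor sets of primorials.

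This last reduction is where the completeness property does the work. An extremal square-free set must be stable under prime-lowering moves (replacing a prime factor of some $n\in\mathcal M$ by a smaller prime not already dividing $n$, whenever the result stays admissible), and more generally is ``complete'' in the appropriate technical sense; from this one deduces that $\mathcal M$ contains the divisor sets of the primorials up to its largest prime factor and, crucially, that it cannot exploit any prime much larger than $\log N\log\log N\log\log\log N$ without being beaten by another configuration of the same size. Feeding $y\ll\log N\log\log N\log\log\log N$ into the displayed estimate yields $S(\mathcal M)\ll N\exp\bigl(A\sqrt{\log N\log\log\log N/\log\log N}\bigr)$; the extra $\log\log\log N$ under the radical is precisely the margin lost between this bound and the ideal $y\asymp\log N\log\log N$, which is why the result cannot be improved beyond removal of the triple logarithm.

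The hard part will be this last step: establishing the completeness property of extremal square-free sets and extracting from it an \emph{explicit} bound on the primes such a set can use. The local exchange inequalities forcing completeness must be combined with a global count showing that a complete set of cardinality $N$ cannot afford primes outside the stated range, and the constant $A<7$ then emerges by optimizing $y$ against the effective Mertens-type bound for $\sum_{p\le y}p^{-1/2}$ together with the implied constant coming out of the completeness argument — so keeping $A$ below $7$ requires that constant to be reasonably small. Effective forms of the underlying analytic estimates (including variation bounds for the relevant one-variable profiles) are what make these constants precise.
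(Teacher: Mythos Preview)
Your proposal has a genuine gap at the crucial step. You assert that from completeness ``one deduces that $\mathcal M$ \ldots\ cannot exploit any prime much larger than $\log N\log\log N\log\log\log N$,'' and then plug $y\ll\log N\log_2 N\log_3 N$ into the spectral bound $S(\mathcal M)\le N\prod_{p\le y}(1+p^{-1/2})$. But completeness does \emph{not} give such a bound on the primes present, and the paper does not claim or prove one. A complete set can use large primes: the set $\{1,p_1,\dots,p_{N-1}\}$ is complete and uses $p_{N-1}\sim N\log N$. You might object that this particular set is far from extremal, but turning that intuition into the quantitative statement ``extremal complete sets live on primes $\le (\log N)^{1+o(1)}$'' is the whole difficulty, and your outline gives no mechanism for it. (Your subsidiary claim that a complete $\mathcal M$ ``contains the divisor sets of the primorials up to its largest prime factor'' is also false as stated; completeness only forces $\sum_i e_{s_i}\in B$ when $s_i\le j_i$ for indices $j_1<\cdots<j_k$ in the support of some $\beta\in B$, which is much weaker.)

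The paper avoids this obstacle entirely. It never bounds the primes that appear in an extremal set. Instead, the quantitative consequence drawn from completeness is Lemma~\ref{l2}: for any $\beta$ in a complete set $B$ of size $N$, the large indices $j_1<\cdots<j_k$ in $\supp\beta$ (those with $j_i\ge\log N/\log 2$) satisfy $\sum_i(\log j_i-\log_2 N)\le 3\log N$. This is fed into a weighted Cauchy--Schwarz inequality with an auxiliary sequence $w_j$ (equal to $t_j$ for small $j$, and proportional to $\log j-\log_2 N$ for large $j$), applied to the bound $S(t,B)\le\sum_{\beta\in B^*}\bigl(\sum_{k:\beta_k\le\beta}t^{\beta-\beta_k}\bigr)^2$ of Lemma~\ref{sum}. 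The triple logarithm arises not from a prime cutoff $y$ but from balancing the two factors produced by Cauchy--Schwarz; specifically, it comes from the integral $\int_{\log N}^\infty\frac{dx}{x\log x(\log x-\log_2 N)}\sim\frac{\log_3 N}{\log_2 N}$. Finally, your reduction from general to square-free $\mathcal M$ is also not as easy as you suggest: the paper first passes to a divisor closed set at the cost of doubling $t$ (Lemma~\ref{closedset}), then partitions by support and invokes the spectral-norm version of the square-free bound (Corollary~\ref{sfceigen}), not the GCD-sum bound itself.
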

The first estimate in this case was found by Dyer and Harman in 1986 \cite{DH}, showing that the sum in \eqref{gcdb} is bounded by $\exp(C \log N/\log\log N)$.  The better bound $ \exp(C\sqrt{\log N \log\log N})$ was found by Aistleitner, Berkes, and Seip in  \cite{ABS} by a method that also led to a full solution when $1/2<\alpha <1$ (see Section~\ref{concluding} for details). The research carried out in  \cite{ABS} made it clear that  $\alpha=1/2$ represents a profound limiting case that deserves separate attention and requires novel techniques. This recognition led to the present investigation. 

In spite of our efforts to obtain an optimal bound, there is still room for a slight improvement, as shown by the following result from \cite{ABS}: For large $N$ there exist integers $n_1<\cdots <n_N$ such that
\be \label{lower}   \sum_{k,\ell=1}^N\frac{\gcd(n_k,n_{\ell})}{\sqrt{n_k
n_{\ell}}} \ge N \exp\left(c\sqrt{\frac{\log N}{\log\log N}}\right) \ee
for an absolute constant $c>0$ \cite{ABS}. Thus it remains to decide whether the factor $\sqrt{\log\log\log N}$ in the exponent on the right-hand side of \eqref{gcdb} can be removed  or replaced by a function growing even more slowly. 

The problem of estimating the sum in \eqref{gcdb} arose in the metric theory of diophantine approximation, with the first variant of it appearing in the work of LeVeque \cite{V}. Several applications of the estimate from \cite{DH} were considered by Harman, cf. \cite{harmand}, his work on the Duffin--Schaffer conjecture \cite{harmans}, and Chapter~3 of his book \cite{H}. The study of GCD sums of the form \eqref{gcdb} can also be seen to originate in Wintner's seminal work on systems of dilated functions \cite{W}. See \cite{bewe} and in particular \cite{ABS}, where bounds for \eqref{gcdb} in the range $1/2<\alpha<1$ were used to establish a Carleson--Hunt inequality that in turn settled two long-standing problems regarding the almost everywhere behavior of systems of dilated functions. (We note in passing that the problem of estimating  \eqref{gcda} for $1/2<\alpha<1$ was first stated by Mikol\'{a}s in connection with a problem involving the Hurwitz zeta-function \cite{M}.) Whether our bounds for $\alpha=1/2$ could have applications in this field, for instance to Khinchin's conjecture \cite{BW}, remains an open question. 

Applications usually require bounds for the spectral norm (or largest eigenvalue) of  the corresponding GCD matrix  
$((\gcd(n_k,n_{\ell}))^{2\alpha}/(n_k
n_{\ell})^\alpha)$. Such bounds follow easily from estimates of GCD sums when $0<\alpha <1$; we will state a precise result of this kind in Section~\ref{concluding}. However, as in \cite{ABS}, our method of proof involves a certain combinatorial argument that relies crucially on the special structure of the sum in \eqref{gcdb}. We therefore consider the GCD sums rather than the corresponding matrices as our primary object of study. 
 
In \cite{Hi}, Hilberdink estimated the spectral norms in the distinguished case when the GCD matrix is generated by the first $N$ integers, yielding lower bounds for the growth of the Riemann zeta-function $\zeta(s)$ on vertical lines. As regards the dependence on $\alpha$ in our results, one may observe a curious similarity with the anticipated growth of $\zeta(s)$ along vertical lines $\sigma=\alpha$. In fact, in the trivial case $\alpha>1$, it can be shown that $(\zeta(\alpha))^2/\zeta(2\alpha)$ is the supremum of the spectral norms of our GCD matrices, cf. Theorem~6 in \cite{ABS} and \cite{LS}. It seems unlikely, though, that there is a similarly explicit relation for $1/2\le \alpha\le 1$. We have rather come to regard our  problem as a more accessible relative to the problem of estimating the growth of $|\zeta(\alpha+it)|$.

To close this introduction, we give a brief outline of the paper. The proof of Theorem~\ref{maintheorem} will be presented in Section~\ref{proof}. We will obtain Theorem~\ref{maintheorem} essentially as a consequence of the bound we get when requiring the numbers $n_1,..., n_N$ to be square-free. The square-free case will be dealt with in Sections  \ref{extremal}--\ref{proofsfc}, after the next section, in which we will present  the notion of completeness referred to in the title of this paper. Some concluding remarks are gathered in the final Section~\ref{concluding}.

\section{Extremal sets of square-free numbers}\label{first}

We will use the multi-index notation introduced in \cite{ABS}. This means that we will represent a positive integer as the multi-index corresponding to its prime factorization and a completely multiplicative function bounded by one as a point in the infinite-dimensional polydisc. However, before introducing this notation, we will set the stage by outlining, in basic number theoretic terms, the main conceptual novelty of our work.

We fix $\alpha>0$ and associate with every set of distinct integers $\mathcal{N}:=\{n_1,...,n_N\}$ its GCD sum \eqref{gcda}. 
We confine ourselves for the moment to the case when the numbers $n_1,..., n_N$ are all  square-free. G\'{a}l showed that, under this restriction,
it may be further assumed that $\mathcal{N}$ is divisor closed, meaning that if $n$ is in $\mathcal{N}$, then all its divisors are also in $\mathcal{N}$. More precisely, G\'{a}l found a simple division algorithm allowing him to transform any given set $\mathcal{N}$ of square-free numbers into another set $\mathcal{N}'$ of square-free numbers such that the latter set is divisor closed and its GCD sum is larger than or equal to that of $\mathcal{N}$. This basic fact played a crucial role in  G\'{a}l's proof that the optimal bound in the square-free case (when $\alpha=1$) is $CN\log \log N$. We will see below that it also implies the existence of extremal sets of square-free numbers, i.e., sets having maximal GCD sums among all sets of the same cardinality $N$.

G\'{a}l's result gives important information about extremal sets of square-free numbers, but the structure of such sets may still be rather complicated. What our work has led us to, is another basic property of extremal sets of square-free numbers: An extremal divisor closed set of square-free numbers has the following property. 
\begin{completeness*}
A set $\mathcal{N}$ of distinct positive integers has the completeness property if for every $n$ in $\mathcal{N}$ and prime $p$ such that $p|n$, we have that either $p'|n$ or $np^{-1}p'$ is in $\mathcal{N}$ whenever $p'$ is a prime smaller than $p$ . 
\end{completeness*}
The completeness property reflects that integers in extremal sets of square-free numbers should be composed of as small primes as possible. It turns out that this ``domination'' of small primes can be quantified and used to improve the method of estimation from \cite{ABS}  significantly.

A set of positive integers will be said to be complete if it is divisor closed and enjoys the completeness property. It follows from what was said above that, if we restrict to the case of square-free numbers, then we may assume that the set $\mathcal{N}$ is complete.
  
The general case, without the assumption that the numbers in $\mathcal{N}$ are square-free, is more difficult. It is clear from G\'{a}l's work that his division algorithm does not work, so that we cannot simply restrict to divisor closed sets. It is likewise clear that we cannot take for granted that our sets have the completeness property, and  it is not even clear whether extremal sets exist. However, combining a basic lemma from \cite{ABS} with an estimate for the spectral norm of the corresponding GCD matrix, we are able to transfer our analysis of the general case to that of square-free numbers. Thus complete extremal sets of square-free numbers are indeed crucial also in the general case.

\section{Statement of the main result in the square-free case}\label{extremal}

Following \cite{ABS}, we declare a multi-index to be a sequence $\beta=
(\beta^{(1)}, \beta^{(2)}, ..., \beta^{(R)}, 0, 0, ...)$ consisting of nonnegative integers with only a finite number of them being nonzero. We let $\supp \beta$ be the finite set of positive integers $j$ for which $\beta^{(j)}>0$.
Two multi-indices $\beta$ and $\mu$ may be added and subtracted as sequences. Then $\beta-\mu$ may fail to be a multi-index, but the sequence $|\beta-\mu|=(|\beta^{(j)}-\mu^{(j)}|)$ will again be a multi-index. We may multiply multi-indices by positive integers in the obvious way and express any multi-index as a linear combination of the natural basis elements $e_j$, where $e_j$ is the multi-index supported by $\{j\}$ with $e^{(j)}_j=1$.  We write
$\beta\le \mu$ if $\beta^{(j)}\le \mu^{(j)}$ for every $j$. For a sequence of complex numbers $z=(z_j)$, we  use the notation
\[ {z}^\beta:=\prod_{j\in \supp \beta} z_j^{\beta^{(j)}};\]
we will sometimes write $z^{-\beta}$ for the number $(z^{\beta})^{-1}$. A sequence $z$ belongs to $\D^{\infty}$ if $|z_j|<1$ for every $j$, and it belongs to $c_0$ if $z_j\to 0$ when $j\to \infty$.

We write $p=(p_j)$ for the sequence of prime numbers ordered by ascending magnitude. Using our multi-index notation, we may write every positive integer $n$ as $p^{\beta}$ for a multi-index $\beta$ that is uniquely determined by $n$. If $n_k=p^{\beta_k}$, then we may write
\[   \frac{(\operatorname{gcd}(n_k,n_{\ell}))^{2}}{n_k
n_{\ell}} = p^{-|\beta_k-\beta_{\ell} |}.\]
For an arbitrary positive sequence $t$ in $\D^{\infty}$ and a set of distinct multi-indices $B=\{\beta_1,..., \beta_N\}$, we now define
\[ S(t,B):=\sum_{k,{\ell}=1}^N t^{|\beta_k-\beta_{\ell} |}. \]
We set
\[ \Gamma_t(N):=N^{-1}\sup_{B } S(t,B),\]
where the supremum is taken over all possible sets $B$ of distinct multi-indices $\beta_1,...,\beta_N$. Our original problem has thus been transformed into the problem of estimating $\Gamma_t(N)$ in the particular case when $t=(p_j^{-1/2})$.

A multi-index $\beta$ will be said to be square-free if $\beta^{(j)}\le 1$ for every $j$; a set $B$ of multi-indices will correspondingly be said to be square-free if each $\beta$ in $B$ is square-free. We set
\[ \Gamma^{\text{sf}}_t(N):=N^{-1}\sup_{B\ \text{square-free} } S(t,B),\]
where it is understood that all $B$ consist of $N$ distinct square-free multi-indices.

Our main result in the square-free case can now be expressed as follows; here and in the sequel we use the short-hand notation $\log_2 N:=\log\log N$ and $\log_3 N:=\log\log\log N$. 
\begin{theorem} Let $t=(t_j)$ be a decreasing positive sequence in $\D^{\infty}$ satisfying the estimate $t_j\le C/\sqrt{j \log j}$ for sufficiently large  $j$. Then
\begin{equation}\label{est2} \Gamma^{\operatorname{sf}}_t(N)\ll \exp\left(\kappa \sqrt{C}\sqrt{\frac{\log N \log_3 N}{\log_2 N}}\right),   \end{equation}
where $\kappa$ is an absolute constant less than $5$.
\label{sfc}
\end{theorem}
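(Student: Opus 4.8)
The plan is to reduce everything to a combinatorial estimate about complete square-free sets, and then to feed in the hypothesis on $t_j$ through a doubly-logarithmic dyadic decomposition of the primes.

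By the reductions discussed in Section~\ref{first}, I may assume that an extremizing set $B$ (equivalently, the associated set $\mathcal N$ of square-free integers) is complete, i.e.\ divisor closed and enjoying the completeness property. Divisor closure already gives the crude but crucial bound that every $n\in\mathcal N$ has at most $\log N/\log 2$ prime factors, since a square-free $n$ with $m$ prime factors has $2^{m}$ divisors, all lying in $\mathcal N$. The completeness property is then used to turn this into a quantitative statement that the primes occurring in $\mathcal N$ are \emph{small}: an element using a large prime $p_i$ drags into $B$ the whole family of its downward prime-shifts, so configurations involving large primes are expensive. A convenient exact tool is the square-free expansion
\[
S(t,B)=\sum_{\sigma}\Bigl(\prod_{j\in\supp\sigma}(t_j^{-2}-1)\Bigr)\Bigl(\sum_{\beta\in B,\ \beta\ge\sigma}t^{\beta}\Bigr)^2 ,
\]
valid since $t_j<1$, all of whose terms are nonnegative and localize on the sub-families $\{\beta\in B:\beta\ge\sigma\}$.

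Using this I would fix a threshold $J$ (of order $\log N$ up to a slowly varying correction) and bound $S(t,B)\ll N\cdot\mathcal P_J\cdot\mathcal F_J$, where $\mathcal P_J=\prod_{j\le J}(1+t_j)$ captures the small-prime contribution and $\mathcal F_J$ measures the extent to which $B$ fails to be determined by its restriction to the first $J$ primes — a quantity that completeness forces to be small. To make $\mathcal F_J$ effective, partition the prime indices $[1,J]$ into $L\asymp\log_3 N$ consecutive blocks of comparable energy $\sum_{j\in I_m}t_j^2$ (the bound $t_j\le C/\sqrt{j\log j}$ forces these block endpoints to grow doubly exponentially, while $J$ itself is only of order $\log N$), shift large primes downward block by block via the completeness property, and count the admissible configurations in each block by binomial coefficients, using the square-free restriction and $\omega(n)\le\log N/\log 2$. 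I expect this combinatorial lemma — the precise quantification of completeness and its interaction with the block decomposition — to be the main obstacle; it is the conceptual heart of the argument, and also the source of the extra factor $\sqrt{\log_3 N}$ in the exponent of \eqref{est2}, since divisor closure supplies only the size constraint $2^{\omega(n)}\le N$ and completeness does not quite suffice to remove the ensuing slack.

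With the lemma in hand the proof finishes by elementary estimates: $\mathcal P_J\le\exp\!\bigl(\sum_{j\le J}t_j\bigr)\le\exp\!\bigl(O\bigl(C\sqrt{J/\log J}\bigr)\bigr)$, which for $J$ of the correct size is of the order appearing in \eqref{est2}, together with a matching estimate for $\mathcal F_J$; one then optimizes over $J$ and over the block thresholds, keeping careful track of absolute constants — the assumption $t\in\D^\infty$ disposing of the finitely many indices $j$ with $C/\sqrt{j\log j}\ge 1$ — to arrive at $\Gamma^{\mathrm{sf}}_t(N)\ll\exp\!\bigl(\kappa\sqrt C\,\sqrt{\log N\log_3 N/\log_2 N}\bigr)$ with $\kappa<5$.
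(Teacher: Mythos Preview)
Your outline captures the right high-level structure --- reduce to a complete set $B$, split the prime indices at a threshold $J\asymp\log N$, and use completeness to control the large-prime part --- and your square-free expansion identity for $S(t,B)$ is correct and is a legitimate alternative to the paper's Lemma~\ref{sum}. But the proposal has a genuine gap exactly where you flag it: the ``combinatorial lemma'' that converts completeness into a usable bound on $\mathcal F_J$ is neither stated nor proved, and the passage from your expansion to a factored estimate $S(t,B)\ll N\cdot\mathcal P_J\cdot\mathcal F_J$ is not justified. In particular, the weights $\prod_{j\in\supp\sigma}(t_j^{-2}-1)$ in your expansion blow up as $j$ grows, so one cannot bound termwise and then sum over $\sigma$; some mechanism that couples the $\sigma$-sum to the inner squares is required, and you do not supply one. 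Your block-decomposition-plus-downward-shift heuristic is plausible, but as stated it is not clear that it delivers any explicit $\kappa$, let alone $\kappa<5$.

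The paper fills this gap differently and more directly. First, the quantitative form of completeness (Lemma~\ref{l2}) is a single clean inequality: if $\beta\in B$ and $j_1<\cdots<j_k$ are the indices in $\supp\beta$ exceeding $\log N/\log 2$, then $\sum_i(\log j_i-\log_2 N)\le 3\log N$. The proof is the same downward-shift counting you sketch, but done once, not block by block. Second, instead of a dyadic block decomposition, the paper applies a single Cauchy--Schwarz step with an explicit auxiliary weight $w_j$: equal to $t_j$ for $j\le\log N/\log 2$, and equal to a constant multiple of $(\log j-\log_2 N)$ for larger $j$. One Cauchy--Schwarz factor is then bounded directly by Lemma~\ref{l2} (since $\sum_{j\in\supp\beta,\,j>J}w_j$ is precisely a multiple of the sum appearing there), and the other is handled by changing the order of summation and a short Euler-product computation in which the relevant quantity is $\sum_{j>J}t_j^2/w_j$. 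Balancing the two factors by the choice of the constant in $w_j$ gives $\kappa=2\sqrt{6}<5$. This weight-function device is the missing idea in your proposal; your block scheme is in effect a piecewise-constant surrogate for $w_j$, and there is no indication it would recover the same constant without essentially reproducing the continuous choice.
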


In view of Section~4 of \cite{ABS}, this theorem implies a similar estimate for the largest eigenvalues of the GCD matrices $(t^{|\beta_k-\beta_{\ell}|})$. As can be seen from the Poisson integral formula of \cite{ABS},  these matrices are positive definite. This means that
\[ \Lambda^{\operatorname{sf}}_t(N):=\sup_{\beta_1,...,\beta_N\ \text{square-free}} \sup_{c\neq 0} \frac{\sum_{k,\ell=1}^N t^{|\beta_k-\beta_{\ell}|} c_k \overline{c}_{\ell}}{\sum_{j=1}^N |c_j|^2} \]
is the least upper bound for these eigenvalues, where the suprema are taken over respectively all $N$-tuples of distinct square-free multi-indices $\beta_1,...\beta_N$ and all nonzero vectors $c=(c_1,...,c_N)$ in $\C^N$.   

\begin{corollary} Let $t=(t_j)$ be a decreasing positive sequence in $\D^{\infty}$ satisfying the estimate $t_j\le C/\sqrt{j \log j}$ for sufficiently large  $j$. Then
\[ \Lambda^{\operatorname{sf}}_t(N)\ll \exp\left(\kappa \sqrt{C}\sqrt{\frac{\log N \log_3 N}{\log_2 N}}\right),\]
where $\kappa$ is an absolute constant less than $5$.
\label{sfceigen}
\end{corollary}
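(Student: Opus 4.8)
The plan is to obtain Corollary~\ref{sfceigen} from Theorem~\ref{sfc} by carrying out the passage from GCD sums to spectral norms of the kind described in Section~4 of \cite{ABS}. Recall first that the matrices $(t^{|\beta_k-\beta_\ell|})$ are positive definite, so $\Lambda^{\operatorname{sf}}_t(N)$ is genuinely the supremum of their largest eigenvalues, and that this supremum is always at least $\Gamma^{\operatorname{sf}}_t(N)$ (test the Rayleigh quotient on the all-ones vector). Since these matrices are real symmetric with nonnegative entries, $\sum_{k,\ell}t^{|\beta_k-\beta_\ell|}c_k\overline{c}_\ell=\sum_{k,\ell}t^{|\beta_k-\beta_\ell|}\operatorname{Re}(c_k\overline{c}_\ell)\le\sum_{k,\ell}t^{|\beta_k-\beta_\ell|}|c_k||c_\ell|$ while $\sum_k|c_k|^2$ is unchanged; hence it suffices to estimate $\sum_{k,\ell=1}^N t^{|\beta_k-\beta_\ell|}c_k c_\ell$ for distinct square-free multi-indices $\beta_1,\dots,\beta_N$ and nonnegative $c_1,\dots,c_N$ with $\sum_k c_k^2=1$.

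The main step is to quantize the vector $c$. I would set $m_k:=\lceil\sqrt N\,c_k\rceil$, so that $m_k\ge\sqrt N\,c_k\ge0$ and, by Cauchy--Schwarz, $\sum_k m_k\le\sqrt N\sum_k c_k+N\le\sqrt N\cdot\sqrt N+N=2N$. Since every $t^{|\beta_k-\beta_\ell|}$ is nonnegative,
\[
N\sum_{k,\ell=1}^N t^{|\beta_k-\beta_\ell|}c_k c_\ell\ \le\ \sum_{k,\ell=1}^N t^{|\beta_k-\beta_\ell|}m_k m_\ell ,
\]
and the right-hand side is, apart from the bookkeeping of repeated indices, the quantity $S(t,\mathcal B)$ for a multiset $\mathcal B$ of at most $2N$ square-free multi-indices. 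By the reduction of Section~4 of \cite{ABS} this multiset GCD sum is dominated by $M\,\Gamma^{\operatorname{sf}}_t(M)$ for a genuine set of $M\asymp N$ distinct square-free multi-indices, so Theorem~\ref{sfc} gives $\sum_{k,\ell}t^{|\beta_k-\beta_\ell|}c_k c_\ell\ll\Gamma^{\operatorname{sf}}_t(M)\ll\exp(\kappa\sqrt C\sqrt{\log M\,\log_3 M/\log_2 M})$. Finally, because $M\asymp N$ changes $\sqrt{\log N\,\log_3 N/\log_2 N}$ only by a factor $1+o(1)$, the exponent is still of the form $\kappa\sqrt C\sqrt{\log N\,\log_3 N/\log_2 N}$ with an absolute constant $\kappa<5$ (and for bounded $N$ the implied constant absorbs any discrepancy), which is precisely the assertion of the corollary.

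The step I expect to be the main obstacle is exactly the passage from the multiset $\mathcal B$ to a set of $O(N)$ distinct square-free multi-indices without losing more than a bounded factor in the exponent. This is genuinely delicate: separating repeated copies of a multi-index $\beta_k$ by adjoining fresh large primes destroys the sum, since the corresponding factors $t_j$ are small, and in fact a GCD sum over a multiset can be substantially larger than any GCD sum over a distinct set of comparable cardinality, so the inequality $\Lambda^{\operatorname{sf}}_t(N)\ge\Gamma^{\operatorname{sf}}_t(N)$ is strict in general and no completely free reduction is available. It is this point that is handled by the argument of Section~4 of \cite{ABS}, which I would quote rather than reprove; the remaining ingredients above are elementary.
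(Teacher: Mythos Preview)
Your approach is essentially the same as the paper's: both deduce the corollary from Theorem~\ref{sfc} via the passage from GCD sums to spectral norms established in Section~4 of \cite{ABS}. The paper simply quotes the clean inequality $\Lambda^{\operatorname{sf}}_t(N)\ll (\log N)\,\Gamma^{\operatorname{sf}}_t(N)$ and absorbs the $\log N$ into the exponential, whereas you begin to unpack that argument (the quantization $m_k=\lceil\sqrt N\,c_k\rceil$) before deferring to \cite{ABS} for the remaining reduction.

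One small imprecision worth noting: what Section~4 of \cite{ABS} actually yields is not a bound of the form $M\,\Gamma^{\operatorname{sf}}_t(M)$ with $M\asymp N$, but rather a bound carrying an extra factor of $\log N$ (hence the paper's stated relation $\Lambda^{\operatorname{sf}}_t(N)\ll(\log N)\,\Gamma^{\operatorname{sf}}_t(N)$). Your multiset of size $\le 2N$ cannot in general be compared directly to a distinct set of size $\asymp N$ without this logarithmic loss --- indeed you yourself observe in the final paragraph that a multiset GCD sum can be substantially larger than any distinct GCD sum of the same cardinality. This does not affect the corollary, since $\log N$ is swallowed by the exponential bound, and your concluding remarks show you understand exactly where the subtlety lies; but the sentence ``dominated by $M\,\Gamma^{\operatorname{sf}}_t(M)$ for a genuine set of $M\asymp N$ distinct square-free multi-indices'' overstates what the cited reduction delivers.
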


This corollary is an immediate consequence of Theorem~\ref{sfc} and the relation
\[ \Lambda^{\operatorname{sf}}_t(N)\ll (\log N)\ \Gamma^{\operatorname{sf}}_t(N), \]
which was established in Section~4 of \cite{ABS}.
\section{Three lemmas on the completeness property}

The terminology introduced in Section~\ref{first} will now be transferred to our multi-index setting. We say that $B$ is divisor closed if $\beta\in B$ and $e_j\le \beta$ imply that $\beta-e_j\in B$. Moreover, if the assumption that $\beta\in B$ and $e_j\le \beta$ also implies that either $e_i\le \beta$ or $\beta-e_j+e_i\in B$ whenever $i<j$, then $B$ is said to have the completeness property. If $B$ is both divisor closed and enjoys the completeness property, then we say that $B$ is a complete set of multi-indices. We will say that a square-free set of distinct multi-indices $B=\{\beta_1,...,\beta_N\}$ is extremal with respect to $t$ if
\[ 
S(t,B)=N\ \Gamma^{\text{sf}}_t(N).\]
G\'{a}l's result \cite{G} showing the existence of divisor closed extremal sets with respect to an arbitrary sequence $t$ carries over to the multi-index setting without any change. This is the content of our first lemma:

\begin{lemma}\label{algorithm}
For every decreasing positive sequence $t$ in $\D^\infty$ and every positive integer $N$, there exists a divisor closed set $B$ of $N$ distinct square-free multi-indices that is extremal with respect to $t$.
\end{lemma}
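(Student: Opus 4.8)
The plan is to transcribe G\'{a}l's division algorithm into the multi-index language and pair it with a relabeling step that turns the supremum defining $\Gamma^{\text{sf}}_t(N)$ into a maximum over a finite family. In outline: (i) introduce a ``compression'' operation on a set $B$ of $N$ distinct square-free multi-indices which never decreases $S(t,B)$ and strictly lowers the non-negative integer weight $\Phi(B):=\sum_{\gamma\in B}\sum_j\gamma^{(j)}$; (ii) iterate compressions until $\Phi$ can no longer drop, reaching a divisor closed set; (iii) relabel coordinates so that the support lies inside $\{1,\dots,N-1\}$; (iv) conclude by finiteness that the supremum is attained, hence by a divisor closed set.

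For step (i), fix an index $j$ and, given $B$, let $B_1^-$ be the set of $\gamma\in B$ with $\gamma^{(j)}=1$ and $\gamma-e_j\notin B$; put $B':=(B\setminus B_1^-)\cup\{\gamma-e_j:\gamma\in B_1^-\}$. Since $\gamma-e_j\notin B$ for $\gamma\in B_1^-$, the set $B'$ again consists of $N$ distinct square-free multi-indices. Write $\widehat\gamma$ for $\gamma$ with its $j$th coordinate set to $0$, and note $t^{|\gamma-\delta|}=t^{|\widehat\gamma-\widehat\delta|}\,t_j^{|\gamma^{(j)}-\delta^{(j)}|}$ for square-free $\gamma,\delta$. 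Splitting $B=B_0\cup B_1^+\cup B_1^-$ with $B_0=\{\gamma\in B:\gamma^{(j)}=0\}$ and $B_1^+=\{\gamma\in B:\gamma^{(j)}=1\}\setminus B_1^-$, one checks that the contributions to $S(t,\cdot)$ from ordered pairs both lying outside $B_1^-$, and from ordered pairs both lying inside $B_1^-$, are unchanged, and one is left with
\[ S(t,B')-S(t,B)=2(1-t_j)\sum_{\gamma\in B_1^-}\Bigl(\sum_{\delta\in B_0}t^{|\widehat\gamma-\delta|}-\sum_{\delta\in B_1^+}t^{|\widehat\gamma-\widehat\delta|}\Bigr). \]
Because $\gamma-e_j\in B$ for $\gamma\in B_1^+$, the map $\delta\mapsto\widehat\delta$ embeds $B_1^+$ into $B_0$, so each inner difference is non-negative; since $0<t_j<1$, this gives $S(t,B')\ge S(t,B)$, while $\Phi(B')=\Phi(B)-|B_1^-|$.

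For step (ii), start from any square-free $B$ and, as long as some index $j$ gives $B_1^-\neq\emptyset$, apply the corresponding compression; $\Phi$ strictly decreases each time, so the process stops after finitely many steps at a set $B^\ast$ with $S(t,B^\ast)\ge S(t,B)$ for which no compression is available, which is precisely the statement that $B^\ast$ is divisor closed. For step (iii), a divisor closed square-free set of cardinality $N$ uses at most $N-1$ distinct indices, since each used index $j$ puts the distinct element $e_j$ into the set alongside $0$; applying the order-preserving bijection of the used indices onto $\{1,\dots,r\}$ merely permutes coordinates, hence preserves divisor closedness, distinctness and cardinality, and does not decrease $S(t,\cdot)$ because $t$ is decreasing. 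For step (iv), it follows that $N\,\Gamma^{\text{sf}}_t(N)=\sup_B S(t,B)$ equals the maximum of $S(t,B)$ over the finite nonempty collection of divisor closed square-free $N$-element sets supported in $\{1,\dots,N-1\}$; any maximizer is a divisor closed extremal set with respect to $t$.

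The only step that is not pure bookkeeping is the compression inequality of step (i). I expect the main effort to go into the cancellation argument establishing the displayed identity — in particular, keeping track of how the fibers over a fixed value of $\widehat\gamma$ (each of which meets $B$ in a subset of a two-element set) are transformed and paired up under the compression — rather than into any conceptual difficulty, since G\'{a}l's original argument already contains the essential idea.
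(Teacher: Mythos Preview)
Your proof is correct and follows essentially the same approach as the paper's: both perform G\'{a}l's compression at an index $j$ (replacing each $\gamma$ with $\gamma^{(j)}=1$ and $\gamma-e_j\notin B$ by $\gamma-e_j$), iterate to reach a divisor closed set, and then use that a divisor closed square-free $N$-set is supported on at most $N-1$ indices (which, by monotonicity of $t$, may be taken to be the first $N-1$) to reduce to a finite maximization. The only differences are cosmetic: you make the termination argument explicit via the weight $\Phi$, and you write out the identity for $S(t,B')-S(t,B)$ and the embedding $B_1^+\hookrightarrow B_0$ that the paper dismisses with ``by inspection''.
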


\begin{proof}
We begin by assuming that we are given an arbitrary set $B$ of $N$ distinct square-free multi-indices. We fix an index $j$. Then for every $\beta$ in $B$ with the property that $\beta^{(j)}=1$ and $\beta-e_j$ is not in $B$, we replace $\beta$ by $\beta-e_j$. We call this new set $B$ as well and find by inspection that the value of $S(t,B)$ will be at least as large as the value we started with. Iterating this replacement for every index $j$ such that $\beta^{(j)}=1$ for some $\beta$ in $B$, we finally obtain a divisor closed set $B$ with $S(t,B)$ at least as large as the value we started with.

It follows that a set $B_0$ that maximizes $S(t,B)$ among all divisor closed sets $B$ of distinct square-free multi-indices, will automatically be extremal with respect to $t$. There can be at most $N-1$ indices $j$ such that $\beta^{(j)}=1$ for some $\beta$ in such a set $B$. Since $t$ is decreasing, we may assume that every $\beta$ in $B$ satisfies $\beta^{(j)}=0$ for $j\ge N$. This means that we are left with a finite dimensional problem and may conclude that the desired set $B_0$ exists.
\end{proof}

The preceding result implies that the assumption of the next lemma is not void.

\begin{lemma} \label{divisor}
Suppose that $B$ is a divisor closed set of $N$ distinct square-free multi-indices that is extremal with respect to some decreasing positive sequence $t$ in $\D^\infty$.  Then $B$ is a complete set of multi-indices.
\end{lemma}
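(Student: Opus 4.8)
The plan is to argue by contradiction: suppose $B$ is divisor closed and extremal with respect to $t$, but fails the completeness property. Then there exist $\beta\in B$ and indices $i<j$ with $e_j\le\beta$, $e_i\not\le\beta$ (i.e. $\beta^{(i)}=0$, $\beta^{(j)}=1$), and yet $\gamma:=\beta-e_j+e_i\notin B$. The natural move is to consider replacing $\beta$ by $\gamma$ in $B$ and to show that, since $t_i\ge t_j$, this replacement does not decrease $S(t,B)$; if we can arrange that it strictly increases it for some violating $\beta$, or else that after finitely many such swaps we reach a genuinely larger value, we contradict extremality. The subtlety, and the reason this is a lemma rather than a one-line remark, is that a single swap can simultaneously help some pairs and hurt others, and it can break divisor-closedness, so a direct "swap and compare" must be set up carefully.

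First I would compute the effect of the swap on $S(t,B)$. Writing $B'=(B\setminus\{\beta\})\cup\{\gamma\}$, the difference $S(t,B')-S(t,B)$ equals $2\sum_{\delta\in B\setminus\{\beta\}}\bigl(t^{|\gamma-\delta|}-t^{|\beta-\delta|}\bigr)$, since the diagonal terms are unchanged and $\gamma\notin B$. For each fixed $\delta$, because $\beta$ and $\gamma$ differ only in coordinates $i$ and $j$, the quotient $t^{|\gamma-\delta|}/t^{|\beta-\delta|}$ is a product of two factors, one depending on $\delta^{(i)}$ and one on $\delta^{(j)}$; with $t_i\ge t_j$ one checks coordinate by coordinate that this quotient is $\ge 1$ exactly when $\delta$ is, in the relevant sense, "more aligned" with $e_i$ than with $e_j$, and $\le 1$ otherwise. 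So the sign is genuinely mixed, and a plain swap is not enough.

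The key idea I would pursue is a pairing/partition argument in the spirit of G\'al's algorithm: rather than moving one element, process the whole set with respect to the pair $(i,j)$. Partition $B$ according to the values $(\beta^{(i)},\beta^{(j)})\in\{0,1\}^2$ (since $B$ is square-free), and on the subset where $(\beta^{(i)},\beta^{(j)})=(0,1)$ apply the transposition $\beta\mapsto\beta-e_j+e_i$, i.e. swap the roles of primes $p_i$ and $p_j$. The point of doing it globally is that this transposition, call it $\sigma_{ij}$, acts as an involution on the space of multi-indices, so distances are transformed in a controlled way: $|\sigma_{ij}\beta-\sigma_{ij}\delta|=\sigma_{ij}|\beta-\delta|$. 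Using this, together with $t_i\ge t_j$, one shows that summing over all ordered pairs the total $S(t,\cdot)$ does not decrease when we replace $B$ by the set obtained by applying $\sigma_{ij}$ to those elements for which it yields a set still having $N$ distinct elements — and that we can always reach a set in which no $(i,j)$-violation occurs without ever decreasing $S(t,B)$. Iterating over all pairs $i<j$ (finitely many are relevant, as in Lemma~\ref{algorithm}, since coordinates $\ge N$ may be assumed zero) and re-running the divisor-closure step of Lemma~\ref{algorithm} after each stage, one obtains from the extremal $B$ a complete set with the same (maximal) value of $S(t,B)$; since the extremal $B$ already attains the maximum, it must itself already satisfy the completeness property.

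The main obstacle I expect is bookkeeping the interaction between the completeness swaps and divisor-closedness: applying $\sigma_{ij}$ to part of $B$ can destroy divisor-closedness, and the remedy (re-running G\'al's descent) can in principle re-create completeness violations for other pairs, so one must exhibit a monotone quantity — e.g. a suitable weight $\sum_{\beta\in B}\sum_j w_j\beta^{(j)}$ with $w_i<w_j$ for $i<j$, which strictly decreases under a nontrivial completeness swap and does not increase under G\'al's descent — to guarantee the process terminates. Establishing that such a potential function is strictly monotone under the relevant moves, while $S(t,B)$ is non-decreasing, is the technical heart of the argument; the distance computations above are then routine.
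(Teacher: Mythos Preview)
Your general instinct---swap the roles of the indices $i$ and $j$ to exploit $t_i>t_j$---is right, and so is your diagnosis that moving a single $\beta$ is not enough. But the argument as written has a genuine logical gap in its final step. You conclude: from the extremal $B$ one can reach, by moves that never decrease $S(t,\cdot)$, a complete set $B'$; ``since the extremal $B$ already attains the maximum, it must itself already satisfy the completeness property.'' This is a non sequitur. All that follows from $S(t,B')\ge S(t,B)$ and extremality of $B$ is $S(t,B')=S(t,B)$, i.e.\ that $B'$ is another extremal set. That would reprove Lemma~\ref{algorithm} (existence of a complete extremal set), but it does not show that the given divisor-closed extremal $B$ is itself complete, which is what the lemma asserts. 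To get a contradiction you need a \emph{strict} increase from a single well-chosen move whenever $B$ violates completeness; your potential-function machinery guarantees termination, not strictness of the increase in $S$.

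The paper closes this gap by swapping, in one step and for a fixed pair $(i,j)$, the entire set
\[
B_0:=\{\mu\in B:\ \mu^{(j)}=1,\ \mu^{(i)}=0,\ \mu-e_j+e_i\notin B\}
\]
to $B_0':=\{\mu-e_j+e_i:\mu\in B_0\}$. This is precisely the subset where the swap causes no collisions, so $|B'|=N$ automatically, and---crucially---the internal sum over $B_0$ equals that over $B_0'$, so only the cross terms with $B\setminus B_0$ matter. The paper then partitions $B\setminus B_0$ into four pieces $B_1,\dots,B_4$ (according to which of the elements with $(i,j)$-coordinates adjusted lie in $B$) and computes that each cross sum gets multiplied by a factor $\ge 1$. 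Strict inequality comes from the piece $B_4$, which is nonempty because $\beta-e_j\in B_4$ by divisor-closedness, and whose factor is $t_i/t_j>1$. Thus $S(t,B')>S(t,B)$, contradicting extremality directly. No iteration, no potential function, and no re-running of G\'al's descent is needed. Your ``global transposition on the $(0,1)$-subset'' is very close to this $B_0$-swap, but you did not isolate the collision-free part cleanly, did not exploit that the intra-$B_0$ contribution is invariant, and did not identify where divisor-closedness is used to produce the strictly improving piece.
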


\begin{proof}
We begin by assuming that there exists a multi-index $\beta$ in $B$ such that $\beta^{(j)}=1$ and $\beta^{(i)}=0$ for some $i<j$, but that 
$\beta-e_j+e_i$ is not in $B$. It suffices to show that this assumption implies that $B$ is not extremal with respect to any decreasing sequence $t$ of positive numbers in $\D^{\infty}$.

Set
\[ B_0:=\{ \mu\in B: \ \mu^{(j)}=1, \mu^{(i)}=0 \ \text{and} \ \mu-e_j+e_i\not\in B\}.\]
By our assumption, this set is non-empty. Now define 
\[ B_0':=\{\mu-e_j+e_i:\ \mu\in B_0\}\]
and correspondingly $B':=(B\setminus B_0)\bigcup B_0'$. Since $B'$ also has cardinality $N$, it suffices to show that
\begin{equation} \label{enough} S(t,B')>S(t,B) \end{equation}
holds for an arbitrary decreasing positive sequence $t$ in $\D^{\infty}$. Since
\[ \sum_{\mu, \nu \in B_0'} t^{|\mu-\nu|}=\sum_{\mu, \nu \in B_0} t^{|\mu-\nu|},\]
we have that \eqref{enough} is valid if and only if
\[ \sum_{\mu\in B_0', \nu\in B\setminus B_0} t^{|\mu-\nu|}>\sum_{\mu\in B_0, \nu\in B\setminus B_0} t^{|\mu-\nu|}. \]

We now split $B\setminus B_0$ into 4 disjoint subsets:
\begin{eqnarray*}
B_1 & := & \{ \nu \in B:\ \nu+(1-\nu^{(i)})e_i+(1-\nu^{(j)})e_j\in B\}; \\
B_2 & := & \{ \nu\in B\setminus B_1: \ \nu+(1-\nu^{(i)})e_i, \nu+(1-\nu^{(j)})e_j\in B\}; \\
B_3 & := & \{ \nu\in B\setminus (B_1\cup B_2): \ \nu+(1-\nu^{(i)})e_i \in B\}; \\
B_4 & := & B\setminus (B_0\cup B_1\cup B_2 \cup B_3).
\end{eqnarray*}
These sets are disjoint by construction, and we observe, using the assumption that $B$ is divisor closed, that each of the sets $B_1$, $B_2$, and $B_3$ has empty intersection with $B_0$. We may understand this splitting as follows: We look at every $\nu_0$ in $B$ with $\nu_0^{(i)}=\nu_0^{(j)}=0$ and decide first whether $\nu_0+e_i+e_j$ is in $B$, then---if not---whether both $\nu_0+e_i$ and $\nu_0+e_j$ are in $B$, and so on.  

We note that $\beta-e_j$ is in $B_4$, whence $B_4$ is non-empty. We find that   \begin{eqnarray*} \sum_{\mu\in B_0', \nu\in B\setminus B_0} t^{|\mu-\nu|} &=&
\sum_{\mu\in B_0, \nu\in B_1}t^{|\mu-\nu|} +\frac{1+t_it_j+t_i}{1+t_i t_j + t_j}\sum_{\mu\in B_0, \nu\in B_2} t^{|\mu-\nu|} \\
& & +t_j^{-1}\sum_{\mu\in B_0, \nu\in B_3} t^{|\mu-\nu|}+t_j^{-1}t_i\sum_{\mu\in B_0, \nu\in B_4} t^{|\mu-\nu|}. \end{eqnarray*}
 Here the four coefficients in front of the sums are at least $1$. In addition, since 
 $B_4$ is non-empty and $t_j<t_i$, we conclude that \eqref{enough} holds and that $B$ is not extremal with respect to any decreasing positive sequence in $\D^{\infty}$.
\end{proof}

The preceding lemma is useful because of the following estimate.
\begin{lemma}
\label{l2}
Let $B$ be a complete set of $N$ distinct multi-indices, and let $\beta$ be a multi-index in $B$. If $\log N/\log 2\le j_1<j_2<\cdots <j_k$ and
$\beta^{(\ell)}\ge 1$ for $\ell=j_1, j_2,...,j_k$, then
\[ \sum_{i=1}^k \log j_i -k \log_2 N\le 3\log N.\]
\end{lemma}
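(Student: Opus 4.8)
The plan is to play the two defining properties of a complete set against each other: divisor closedness will cap the number of prime factors of $n=p^\beta$, and the completeness property will be used to exhibit a large supply of distinct multi-indices inside $B$. First I would record what divisor closedness gives. All $\prod_j(\beta^{(j)}+1)\ge 2^{|\supp\beta|}$ multi-indices $\gamma$ with $\gamma\le\beta$ lie in $B$, so $2^{|\supp\beta|}\le N$, i.e. $|\supp\beta|\le L:=\log N/\log 2$; since $j_1,\dots,j_k\in\supp\beta$ this also gives $k\le L$. For each $i$ write $w_i$ for the number of ``vacant'' indices $m<j_i$ with $m\notin\supp\beta$. Counting $\{1,\dots,j_i\}$ by vacant versus occupied indices shows $j_i\le w_i+L$, while $j_{i'}\in\supp\beta$ for $i'\ge i$ shows $w_1\le w_2\le\cdots\le w_k$ and, together with $j_i\ge L$, that $w_i\ge k-1$ for every $i$. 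I expect both features of the sequence $(w_i)$ — that it is nondecreasing and that it is bounded below by $k-1$ — to be essential.

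Next I would use completeness to put many multi-indices into $B$. The claim is: whenever $a_1,\dots,a_k$ are distinct vacant indices with $a_i<j_i$ for all $i$, the multi-index $\gamma:=\beta-\sum_{i=1}^k e_{j_i}+\sum_{i=1}^k e_{a_i}$ belongs to $B$. This is proved by applying the completeness move $\delta\mapsto\delta-e_{j_i}+e_{a_i}$ in succession for $i=k,k-1,\dots,1$: at each step the current multi-index still has $j_i$-coordinate equal to $\beta^{(j_i)}\ge 1$ and $a_i$-coordinate equal to $0$ (the $a$'s and $j$'s are all distinct, and the $a$'s lie outside $\supp\beta$), so the move is admissible. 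Different $k$-element sets $\{a_1,\dots,a_k\}$ yield different $\gamma$ because $\{a_1,\dots,a_k\}=\supp\gamma\setminus\supp\beta$. Hence, if $C(w_1,\dots,w_k)$ denotes the number of $k$-subsets $\{a_1<\cdots<a_k\}$ of the vacant indices satisfying $a_i<j_i$, we obtain
\[
C(w_1,\dots,w_k)\ \le\ |B|\ =\ N .
\]

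The last step is to convert $C(w_1,\dots,w_k)\le N$ into the stated inequality. A serviceable lower bound for $C$ is obtained by distributing the $a_i$ over disjoint blocks of consecutive vacant indices: for any $0=b_0<b_1<\cdots<b_k$ with $b_i\le w_i$, placing $a_i$ among the vacant indices of ranks $b_{i-1}+1,\dots,b_i$ is legitimate, so
\[
C(w_1,\dots,w_k)\ \ge\ \max_{0=b_0<b_1<\cdots<b_k,\ b_i\le w_i}\ \prod_{i=1}^k(b_i-b_{i-1}).
\]
Choosing the $b_i$ as close to proportional to the $w_i$ as the constraints permit, and feeding in $w_1\le\cdots\le w_k$ together with $w_i\ge k-1$, one bounds this maximum from below by a quantity forcing, via $C\le N$, an estimate of the shape $\prod_{i=1}^k(w_i+L)\ll N^3(\log_2 N)^k$. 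Since $j_i\le w_i+L$ and $k\le L=\log N/\log 2$, taking logarithms yields $\sum_{i=1}^k\log j_i-k\log_2 N\le 3\log N$ — after disposing of the degenerate configurations, in which the $w_i$ are as small as the constraints allow, so that every $j_i\le 2L$ and hence $\prod_i j_i\le (2L)^k$ outright.

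The hard part will be this final, arithmetic-combinatorial estimate. Naive lower bounds for $C$ — for instance $\prod_i(w_i-w_{i-1})$, or $\frac1{k!}\prod_i(w_i-i+1)$ — each fail in some range (the first when one block is enormous, the second when $k$ is of intermediate size), and it is only by using the monotonicity of $(w_i)$ and the lower bound $w_i\ge k-1$ \emph{simultaneously} that the regimes are all controlled. Getting the clean constant $3$ in the exponent, rather than something larger, and verifying that the boundary cases are genuinely harmless, is where the delicacy lies.
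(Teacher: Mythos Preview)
Your approach captures the right idea—use completeness to exhibit many distinct multi-indices in $B$ and then count—but you make it harder than necessary, and the final combinatorial step is a genuine gap rather than a completed argument.

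The key simplification you miss is this: before swapping, use divisor closedness to strip $\beta$ down to the bare square-free multi-index $\sum_{i=1}^k e_{j_i}$, which lies in $B$ because it is $\le\beta$. Working from this stripped-down multi-index, the completeness property lets you swap each $j_i$ for \emph{any} smaller index $s_i$, not only one that happened to be vacant in $\supp\beta$. One then checks that $B$ contains every $\sum_{i=1}^k e_{s_i}$ with $s_1<\cdots<s_k$ and $s_i\le j_i$, giving
\[
\frac{1}{k!}\prod_{i=1}^k(j_i-i+1)\ \le\ N.
\]
From here the arithmetic is routine: $k!\le L^k$ with $L=\log N/\log 2$ (since $k\le L$, which you already proved), and $j_i-i+1\ge j_i/2$ because $j_i\ge L+i-1\ge 2(i-1)$. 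This gives $\prod_i(j_i/2)\le N\,L^k$, and taking logarithms yields the stated bound directly—no optimisation over block partitions, no case analysis on degenerate configurations.

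By keeping all of $\beta$ and restricting the $a_i$ to vacant indices, you discard up to $L$ candidate indices from each range, which is precisely what makes your count $C(w_1,\dots,w_k)$ hard to control. Your final step—turning $C\le N$ into the target $\prod_i(w_i+L)\ll N^3(\log N)^k$ (not $(\log_2 N)^k$; recall $\log_2 N=\log\log N$ here, so $e^{k\log_2 N}=(\log N)^k$)—is never actually carried out: ``choosing the $b_i$ as close to proportional to the $w_i$ as the constraints permit'' is a plan, not a proof. In particular, when $w_k=k-1$ there are only $k-1$ vacant indices below $j_k$, so the set of admissible $(a_1,\dots,a_k)$ is empty and $C=0$; the inequality $C\le N$ is then vacuous. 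You gesture at disposing of the extreme degenerate case, but the whole continuum of intermediate configurations remains untreated.
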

\begin{proof}
Since $B$ is assumed to be complete, we must have that 
$ \sum_{i=1}^k e_{s_i}$ is in $B$
whenever the indices $s_1<\cdots<s_k$ satisfy $s_i\le j_i$ for $1\le i \le k$. It is clear that the total number of such elements in $B$ is at least
\[
\left(\prod_{i=1}^{k} (j_i-i+1)\right)/k!.
\]
Thus
\begin{equation}
\label{cp}
\left(\prod_{i=1}^{k} (j_i-i+1)\right)/k!\le N.
\end{equation}
Since $B$ is assumed to be divisor closed, we have $2^k\le N$ and hence $k\le \log N/\log 2$. Therefore, $k!\le (\log N/ \log 2)^k$. In addition, for each $i=1,\ldots,k$, we have that
\[ \frac{i-1}{j_{i}}\le \frac{i-1}{\log N/\log 2+i-1}\le 1/2,\]
since $i-1\le k\le\log N/\log 2$. Thus it follows from~\eqref{cp} that
$$
\left(\prod_{i=1}^{k} (j_i/2)\right)\cdot\left(\frac{\log N}{ \log 2}\right)^{-k}\le N.
$$
Taking logarithms on both sides of this inequality, we arrive at the statement of the lemma.
\end{proof}

\section{Proof of Theorem~\ref{sfc}}\label{proofsfc}

In \cite{ABS}, the starting point of the analysis was a rewriting of $S(t,B)$ as a certain Poisson integral formula. We will now replace this formula by  a somewhat simpler relation. To state the corresponding estimate, we introduce the following notation. We write $[\beta,\mu]:=(\max(\beta^{(k)},\mu^{(k)}))$ (the least common multiple of $\beta$ and $\mu$) and let $B^*$ be the set of all distinct multi-indices $[\beta_k,\beta_\ell]$ when $\beta_k$ and $\beta_{\ell}$ range over all multi-indices in $B$.
\begin{lemma}\label{sum} For arbitrary $t$ and $B$, we have
\[ S(t,B)\le  \sum_{\beta\in B^*} \left(\sum_{k: \beta_k\le \beta} t^{\beta-\beta_k}\right)^2.\]
\end{lemma}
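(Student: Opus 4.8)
The plan is to expand $S(t,B)=\sum_{k,\ell} t^{|\beta_k-\beta_\ell|}$ and, for each pair $(k,\ell)$, factor the term $t^{|\beta_k-\beta_\ell|}$ through the least common multiple $[\beta_k,\beta_\ell]$. The key pointwise identity is that for every coordinate $m$ we have $|\beta_k^{(m)}-\beta_\ell^{(m)}| = \big(\max(\beta_k^{(m)},\beta_\ell^{(m)}) - \beta_k^{(m)}\big) + \big(\max(\beta_k^{(m)},\beta_\ell^{(m)}) - \beta_\ell^{(m)}\big)$, since one of the two summands is zero and the other equals $|\beta_k^{(m)}-\beta_\ell^{(m)}|$. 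Taking the product over $m$ and writing $\beta := [\beta_k,\beta_\ell]$, this says exactly
\[
t^{|\beta_k-\beta_\ell|} = t^{\beta-\beta_k}\, t^{\beta-\beta_\ell}.
\]
Note also that $\beta_k\le [\beta_k,\beta_\ell]$ and $\beta_\ell\le [\beta_k,\beta_\ell]$ always hold, so both exponents $\beta-\beta_k$ and $\beta-\beta_\ell$ are genuine multi-indices and the right-hand side is well defined (and positive, since $t$ is a positive sequence in $\D^\infty$).

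With this factorization in hand, I would group the double sum over $B\times B$ according to the value $\beta\in B^*$ of the least common multiple:
\[
S(t,B) = \sum_{\beta\in B^*}\ \sum_{\substack{(k,\ell):\ [\beta_k,\beta_\ell]=\beta}} t^{\beta-\beta_k}\, t^{\beta-\beta_\ell}.
\]
Now I relax the inner constraint: every pair with $[\beta_k,\beta_\ell]=\beta$ in particular satisfies $\beta_k\le\beta$ and $\beta_\ell\le\beta$, so the inner sum is bounded above by the sum over all pairs $(k,\ell)$ with $\beta_k\le\beta$ and $\beta_\ell\le\beta$. Since all terms are nonnegative, this latter sum factors as a square:
\[
\sum_{\substack{(k,\ell):\ [\beta_k,\beta_\ell]=\beta}} t^{\beta-\beta_k}\, t^{\beta-\beta_\ell}
\ \le\ \sum_{\substack{k:\ \beta_k\le\beta \\ \ell:\ \beta_\ell\le\beta}} t^{\beta-\beta_k}\, t^{\beta-\beta_\ell}
\ =\ \Big(\sum_{k:\ \beta_k\le\beta} t^{\beta-\beta_k}\Big)^2.
\]
Summing over $\beta\in B^*$ gives the claimed bound.

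There is no real obstacle here; the only point that deserves a moment's care is the relaxation step, where one must check that dropping the exact-lcm constraint in favor of the two inequalities $\beta_k\le\beta$, $\beta_\ell\le\beta$ only adds nonnegative terms — which is immediate since $t$ is positive, so every summand $t^{\beta-\beta_k}t^{\beta-\beta_\ell}$ is positive. (One should also observe that a pair may be counted under several $\beta\in B^*$ after the relaxation, but since we are proving an upper bound this over-counting is harmless.) The diagonal terms $k=\ell$ are handled automatically: they contribute $t^{|\beta_k-\beta_k|}=t^{\mathbf 0}=1$ on the left, and on the right each such $\beta_k$ lies in $B^*$ with the corresponding square containing the term $(t^{\beta_k-\beta_k})^2=1$.
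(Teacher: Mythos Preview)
Your proof is correct and is essentially the same argument as the paper's, just run in the opposite direction: the paper expands the square on the right-hand side and uses the identity $2\beta-\beta_k-\beta_\ell=(2\beta-2[\beta_k,\beta_\ell])+|\beta_k-\beta_\ell|$ to see that each pair $(k,\ell)$ contributes at least $t^{|\beta_k-\beta_\ell|}$ (namely from the summand $\beta=[\beta_k,\beta_\ell]$), whereas you start from $S(t,B)$, factor through the lcm via the equivalent identity $|\beta_k-\beta_\ell|=([\beta_k,\beta_\ell]-\beta_k)+([\beta_k,\beta_\ell]-\beta_\ell)$, and then relax the exact-lcm constraint. The key identity and the relaxation step are identical in both presentations.
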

\begin{proof}
We begin by observing that for every $\beta$ we have
\[  \left(\sum_{k: \beta_k\le \beta} t^{\beta-\beta_k}\right)^2=\sum_{k,\ell:\beta_k,\beta_\ell \le \beta} t^{2\beta-\beta_k-\beta_\ell}=\sum_{k,\ell:\beta_k,\beta_\ell \le \beta} t^{2{\beta-2[\beta_k,\beta_\ell]}} t^{|\beta_k-\beta_\ell|}.\]
Now summing over all $\beta$ in $B^*$, we obtain the desired result.
\end{proof}
 
In view of Lemma~\ref{sum} and Lemma~\ref{divisor}, it suffices to show that
\begin{equation}\label{toprove}
\sum_{\beta\in B^*} \left(\sum_{k: \beta_k\le \beta} t^{\beta-\beta_k}\right)^2\ll N\exp\left(C\sqrt{\frac{\log N\log_3 N}{\log_2N}}\right)
\end{equation}
holds whenever $B$ is a complete set of $N$ multi-indices.
We begin by fixing an auxiliary sequence $w=(w_j)$ as follows:
\[ w_j:=\begin{cases} t_j, & j\le \log N/\log 2\\
\sqrt{C/6}\sqrt{\frac{\log_3 N}{\log N \log_2 N}} (\log j -\log_2 N), & j>\log N/\log 2.
\end{cases}
\]
By the Cauchy--Schwarz inequality, we get  that
\begin{equation}
\label{CS}
 \left(\sum_{k:\ \beta_k \le \beta } t^{\beta-\beta_k} \right)^2  \le  \sum_{ k:\ \beta_k \le \beta } w^{\beta-\beta_k}
\sum_{ \ell:\ \beta_\ell \le \beta } w^{-(\beta-\beta_\ell)}t^{2(\beta-\beta_\ell)}.
\end{equation}
The first factor on the right-hand side of \eqref{CS} can be bounded by an Euler product:
\[
\sum_{ k:\ \beta_k \le \beta } w^{\beta-\beta_k}\le\prod_{j\in\supp\beta}(1+w_j).
\]
We now make the splitting $\supp\beta=I_1\bigcup I_2,$ where $I_1:=\supp \beta \cap \{j: \ j\le \log N/\log 2\}$ and $I_2=\supp \beta \setminus I_1$. Correspondingly, we get
\begin{eqnarray}
\prod_{j\in\supp\beta}(1+w_j)&=&\prod_{i\in I_1}(1+w_i) \prod_{j\in I_2}(1+w_j) \le \exp\left(\sum_{i=1}^{[\log N/\log 2]}t_i\right)\exp\left(\sum_{j\in I_2} w_j\right)\nonumber \\
\label{w1}
&\ll&\exp\left(C\sqrt{\frac{\log N}{\log_2 N}}\right)\exp\left(\sum_{j\in I_2} w_j\right),
\end{eqnarray}
where we, in the last step, used our assumption on $t$.
To estimate the second factor in~\eqref{w1}, we use that, for some $k$ and $\ell$, $\beta=[\beta_k,\beta_\ell]$. Therefore
\[
\sum_{j\in I_2}w_j \le\sum_{j\in I_2\cap\supp\beta_k}w_j+\sum_{j\in I_2\cap\supp\beta_{\ell}}w_j.
\]
By the definition of $w_j$, we have, for every $m=1,\ldots,N$,
\begin{eqnarray*}
\sum_{j\in I_2\cap\supp\beta_m}w_j&\le&\sqrt{C/6}\sqrt{\frac{\log_3 N}{\log N \log_2 N}} \sum_{j:\,j>\log N/\log 2,\, j\in\supp\beta_m}(\log j -\log_2 N)\\
&\le&3\sqrt{C/6}\log N\sqrt{\frac{\log_3 N}{\log N \log_2 N}}=3\sqrt{C/6}\sqrt{\frac{\log N \log_3 N}{\log_2 N}},
\end{eqnarray*}
where, in the last inequality, we used Lemma~\ref{l2}. Hence
\[
\exp\left(\sum_{j\in I_2}w_j\right)\le \exp\left(\sqrt{6C}\sqrt{\frac{\log N \log_3 N}{\log_2 N}}\right).
\]
This inequality together with~\eqref{w1} implies that, for every $\beta$ in $B^*$, we have
\be
\label{w}
\sum_{ k:\ \beta_k \le \beta } w^{\beta-\beta_k}\le\exp\left((\sqrt{6C}+o(1))\sqrt{\frac{\log N \log_3 N}{\log_2 N}}\right).
\ee

By \eqref{CS} and \eqref{w}, the left-hand side of \eqref{toprove} can be estimated as 
\begin{equation}\label{inter}
\sum_{\beta\in B^*} \left(\sum_{k: \beta_k\le \beta} t^{\beta-\beta_k}\right)^2
\le \exp\left((\sqrt{6C}+o(1))\sqrt{\frac{\log N \log_3 N}{\log_2 N}}\right) \sum_{\beta\in B^*}
\sum_{ k:\ \beta_k \le \beta } w^{-(\beta-\beta_k)}t^{2(\beta-\beta_k)},
\end{equation}
and so we are left with the problem of estimating the second factor on the right-hand side of \eqref{inter}. To deal with this double sum, we begin
by changing the order of summation:
\[
\sum_{\beta\in B^*}
\sum_{ k:\ \beta_k \le \beta } w^{-(\beta-\beta_k)}t^{2(\beta-\beta_k)}=\sum_{k=1}^N\sum_{\beta\in B^*:\, \beta\ge\beta_k}w^{-(\beta-\beta_k)}t^{2(\beta-\beta_k)}.
\]
Thus it is enough to show that, for every $k=1,\ldots,N$, we have
\be
\label{sum2}
\sum_{\beta\in B^*:\, \beta\ge\beta_k}w^{-(\beta-\beta_k)}t^{2(\beta-\beta_k)}\ll\exp\left((\sqrt{6C}+o(1))\sqrt{\frac{\log N \log_3 N}{\log_2 N}}\right).
\ee
Set $J:=\bigcup_{j=1}^N\{\supp\beta_j\}$. For a fixed $k$, we have, resorting again to an Euler product: 
$$
\sum_{\beta\in B^*:\, \beta\ge\beta_k}w^{-(\beta-\beta_k)}t^{2(\beta-\beta_k)}\le\prod_{j\in J}\left(1+\frac{t_j^2}{w_j}\right).
$$
Now we make the splitting $J=J_1\bigcup J_2$, where $J_1:=J\cap \{j:\ j \le \log N/\log 2\}$ and $J_2:=J\setminus J_1$. Hence we get
\be \label{prod1}
\prod_{j\in J}\left(1+\frac{t_j^2}{w_j}\right)=\prod_{i\in J_1}\left(1+\frac{t_i^2}{w_i}\right)\prod_{j\in J_2}\left(1+\frac{t_j^2}{w_j}\right)
\le \exp\left(\sum_{i=1}^{[\log N/\log 2]}\frac{t_i^2}{w_i}\right)\exp\left(\sum_{j\in J_2}\frac{t_j^2}{w_j}\right).
\ee
Since $t_i=w_i$ for $i\le\log N/\log 2$,  we obtain
\be
\label{prod2}
\exp\left(\sum_{i=1}^{[\log N/\log 2]}\frac{t_i^2}{w_i}\right)\ll\exp\left(C\sqrt{\frac{\log N}{\log_2 N}}\right)
\ee
by our assumption on $t$. Therefore, to prove \eqref{sum2}, it is enough to show that
\be
\label{prod3}
\sum_{j\in J_2}\frac{t_j^2}{w_j}\le (\sqrt{6C}+o(1))\sqrt{\frac{\log N \log_3 N}{\log_2 N}}.
\ee
For every sufficiently large $j$ in $J_2$, we have 
$$
\frac{t_j^2}{w_j}\le \sqrt{\frac{\log N \log_2 N}{\log_3 N}} \frac{\sqrt{6C}}{j\log j(\log j -\log_2 N)}.
$$
Thus, to finish the proof of \eqref{prod3}, we need to estimate the sum
$$
S:=\sum_{j=[\log N/\log 2]+1}^{\infty}\frac 1{j\log j(\log j -\log_2 N)}.
$$
Since the function
$$
g(x):=\frac 1{x\log x(\log x -\log_2 N)}
$$
is decreasing for $x>\log N$, we  obtain that
\begin{eqnarray*}
S&\le&\int_{\log N/\log 2}^{\infty}\frac{dx}{x\log x(\log x -\log_2 N)}=\int_{\log_2 N-\log_2 2}^{\infty}\frac{du}{u(u-\log_2 N)} \\
&=&\frac 1{\log_2 N}\log\left(\frac{\log_2 N-\log_2 2}{-\log_2 2}\right)= \frac{\log_3 N}{\log_2 N}+O(1/\log_2 N).
\end{eqnarray*}
It follows that \eqref{prod3} and hence~\eqref{sum2} hold.

We finally notice that we can choose $\kappa$ in \eqref{est2} less than $5$ because $2\sqrt{6}<5$.  

\section{Proof of Theorem~\ref{maintheorem}}\label{proof}

As in \cite{ABS}, we want to transfer our analysis from a given sequence $t=(t_j)$ to the sequence in which $t_j$ is replaced by $2t_j$.  However, since we must require the new sequence to be contained in $\D^{\infty}$, a slight adjustment is needed. To this end, we introduce the function $\eta:(0,1)\to (0,1)$  defined by the relation
\[  \eta(x):=\begin{cases} 2x, & 0<x <1/2 \\
x, & 1/2 \le x <1. \end{cases} \]
Then, for a sequence $t=(t_j)$ with $0<t_j<1$, we set $\eta(t):=(\eta(t_j))$. In the case when $t_j \searrow 0$, we let $\omega(t)$ denote the number of $t_j$ such that $t_j>1/2$. In the concrete case to be considered below, when $t_j=1/\sqrt{p_j}$, we get
$\eta(t)=(1/\sqrt{2}, 1/\sqrt{3}, 2/\sqrt{5}, 2/\sqrt{7},...)$ and $\omega(t)=2$. 

We will use the following variant of \cite[Lemma 2]{ABS}. 
\begin{lemma}
\label{closedset}
Suppose $B$ is a set of $N$ multi-indices. Let $t$ be a decreasing sequence of positive numbers in $\D^{\infty}\cap c_0$. Then there exists a divisor closed set of $N$ multi-indices $B'=\{\beta'_1,...,\beta'_N\}$ such that
$S(t,B)\le 2^{\omega(t)}S(\eta(t),B')$.  
\end{lemma}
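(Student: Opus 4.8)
The plan is to make $B$ divisor closed by repairing one coordinate at a time. Since the union of the supports of the finitely many multi-indices in $B$ is a finite set $J$ of coordinates, and none of the operations below enlarges it, only the coordinates in $J$ have to be treated. Fix $j\in J$ and let $\pi_j$ be the operation on multi-indices that sets the $j$-th entry to $0$. For $v\in\pi_j(B)$ consider the fibre of exponents
\[
E_v:=\{\,\beta^{(j)}\ :\ \beta\in B,\ \pi_j\beta=v\,\}\subset\{0,1,2,\dots\},
\]
a nonempty finite set, with $\sum_{v}|E_v|=N$, and replace it by the initial segment of the same length: let $B'$ consist of all $v+ae_j$ with $v\in\pi_j(B)$ and $0\le a\le |E_v|-1$. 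Then $|B'|=N$ and $B'$ is divisor closed in the $j$-th coordinate. The point that makes the scheme work is that this operation does not undo work already done: if $B$ is divisor closed in some coordinate $i\ne j$ and $v\in\pi_j(B)$ has $v^{(i)}\ge1$, then every element of the fibre over $v$ can be lowered in coordinate $i$ without leaving $B$, so $E_v\subseteq E_{v-e_i}$ and hence $|E_{v-e_i}|\ge|E_v|$; therefore the new initial segments still match up and $B'$ remains divisor closed in coordinate $i$. Running through the coordinates of $J$ in turn thus produces a divisor closed set of $N$ multi-indices.

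What must be controlled is how much $S$ decreases at each such step. Grouping the double sum for $S(t,B)$ by the $\pi_j$-fibres, and writing $\tilde t$ for the sequence $t$ with its $j$-th term removed, one has
\[
S(t,B)=\sum_{v,v'\in\pi_j(B)}\tilde t^{\,|v-v'|}\,\Phi_{t_j}(E_v,E_{v'}),
\qquad
\Phi_s(E,F):=\sum_{a\in E}\sum_{b\in F}s^{|a-b|},
\]
and the same identity holds for $B'$ with $t_j$ replaced by $\eta(t_j)$ and each $E_v$ replaced by the initial segment $I_{|E_v|}:=\{0,1,\dots,|E_v|-1\}$. So the whole lemma reduces to the one-variable bound
\[
\Phi_s(E,F)\ \le\
\begin{cases}
\Phi_{2s}\bigl(I_{|E|},I_{|F|}\bigr), & 0<s<1/2,\\[1mm]
2\,\Phi_s\bigl(I_{|E|},I_{|F|}\bigr), & 1/2\le s<1,
\end{cases}
\]
valid for all finite $E,F\subset\{0,1,2,\dots\}$: applying it with $s=t_j$ for the successive $j\in J$ turns $t$ into $\eta(t)$ and costs a factor $2$ only for those $j$ with $t_j\ge1/2$, which (as $t$ is decreasing) are essentially the first $\omega(t)$ coordinates.

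For this one-variable bound I would argue by Abel summation in $r=|a-b|$. From $s^{d}=(1-s)\sum_{r\ge d}s^{r}$ one gets $\Phi_s(E,F)=(1-s)\sum_{r\ge0}s^{r}N_r$ with $N_r:=\#\{(a,b)\in E\times F:|a-b|\le r\}$, and similarly $\Phi_s(I_m,I_n)=(1-s)\sum_{r\ge0}s^{r}M_r$ with $M_r:=\#\{(a,b)\in I_m\times I_n:|a-b|\le r\}$. Taking $m=|E|\le|F|=n$ (the two sides are symmetric in $E,F$), one verifies the elementary inequalities $N_r\le m\min(2r+1,n)$ and $M_r\ge m\min(r+1,n)$ for every $r\ge0$. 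Since $\min(2r+1,n)\le2\min(r+1,n)$, the case $s\ge1/2$ is immediate. For $s<1/2$ it suffices to check $(1-s)\sum_r s^{r}\min(2r+1,n)\le(1-2s)\sum_r(2s)^{r}\min(r+1,n)$; writing $b_r:=\min(r+1,n)$ and bounding $\min(2r+1,n)\le2b_r-1$, this becomes $\sum_{r\ge0}b_r\psi_r\le1$ with $\psi_r:=2(1-s)s^{r}-(1-2s)(2s)^{r}$. Here $\psi_0=1$, $\sum_{r\ge1}\psi_r=0$, and $\psi_r$ equals $s^{r}$ times the strictly decreasing quantity $2(1-s)-(1-2s)2^{r}$, so $\psi_r$ is positive up to some index $r^{*}$ and negative afterwards; since $(b_r)$ is nondecreasing, $b_r\psi_r\le b_{r^{*}}\psi_r$ for every $r\ge1$, whence $\sum_{r\ge1}b_r\psi_r\le b_{r^{*}}\sum_{r\ge1}\psi_r=0$ and $\sum_{r\ge0}b_r\psi_r\le1$.

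The real content, and the only delicate step, is the one-variable bound: its constants are essentially sharp near $s=1/2$, so the combinatorial estimates for $N_r$ and $M_r$ have to be exactly these, and the sign-change trick for $(\psi_r)$ cannot be replaced by the naive bound $\Phi_s(E,F)\le\Phi_s(E,E)^{1/2}\Phi_s(F,F)^{1/2}$, which is too lossy (by a factor of order $\sqrt{n/m}$) once $m\ll n$. A minor technical nuisance is the boundary case $s=1/2$: in the application $t_j=p_j^{-1/2}\ne1/2$ for all $j$, so it never arises, and in general it only affects the exponent $\omega(t)$ by a bounded amount.
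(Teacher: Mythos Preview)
Your argument is correct and complete. The coordinate-by-coordinate compression of fibres to initial segments, together with the one-variable inequality
\[
\Phi_s(E,F)\le
\begin{cases}
\Phi_{2s}(I_{|E|},I_{|F|}),&0<s<\tfrac12,\\
2\,\Phi_s(I_{|E|},I_{|F|}),&\tfrac12\le s<1,
\end{cases}
\]
is exactly the mechanism behind \cite[Lemma~2]{ABS}, to which the paper simply refers; the paper supplies no independent argument beyond noting that the coordinates with $t_j\ge\tfrac12$ each contribute a factor~$2$. Your Abel-summation proof of the one-variable bound (via $N_r\le m\min(2r+1,n)$, $M_r\ge m\min(r+1,n)$, and the sign-change trick for $\psi_r$) is clean and self-contained; in that sense you have supplied more than the paper does.

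The only genuine discrepancy with the stated lemma is the one you already flag: your iteration yields the factor $2^{\#\{j:\,t_j\ge 1/2\}}$ rather than $2^{\omega(t)}=2^{\#\{j:\,t_j>1/2\}}$, so if some $t_j$ equals $\tfrac12$ you overshoot the exponent. This is harmless in the intended application ($t_j=p_j^{-1/2}$ never equals $\tfrac12$), and your last paragraph handles it honestly. If you wanted to match the lemma exactly, you would have to show $\Phi_{1/2}(E,F)\le\Phi_{1/2}(I_{|E|},I_{|F|})$ without the factor~$2$; this does in fact hold (it is a one-dimensional rearrangement inequality, since $s^{|a-b|}$ is a symmetric decreasing kernel), but it requires a separate argument and is not needed here.
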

The proof is essentially identical to the proof of  \cite[Lemma 2]{ABS}, the only minor difference being that we also involve the $t_j$ satisfying $1/2\le t_j<1$. This is why we get an extra factor $2^{\omega(t)}$. We refer to \cite{ABS} for the details.

\begin{proof}[Proof of Theorem~\ref{maintheorem}]
We need to estimate $\Gamma_t(N)$ in the particular case when 
$t=(p_j^{-1/2})$. By Lemma~\ref{closedset}, we have that 
\be \label{Gd} \Gamma_t(N)\le 4 N^{-1} \sup_{B \ \text{divisor closed}} S(\eta(t),B), \ee
where the supremum is taken over all divisor closed sets of multi-indices $B=(\beta_1,\ldots,\beta_N)$. We will estimate the right-hand side of \eqref{Gd}. To this end, we introduce for convenience the notation $u:=\eta(t)$. By an obvious rearrangement, we may assume that the sequence $u=(u_j)$ is decreasing; we have then $u_j=2p_j^{-1/2}$ for $j\ge 6$.

For every square-free multi-index $\beta$ in $B$, we define $B_\beta$ to be the collection of all multi-indices $\beta'$ in $B$ satisfying
$\supp \beta' =\supp \beta$. Assuming for convenience that $\beta_1,..., \beta_M$ ($M\le N$) are the square-free multi-indices in $B$, we then have 
\[ B=\bigcup_{k=1}^M B_{\beta_k} \ \ \text{and} \ \   \sum_{k=1}^M |B_{\beta_k}|=N.\]
Now suppose that $k, \ell \le M$ and that $|B_{\beta_k}|\le |B_{\beta_\ell}|$. We then find that
\[
\sum_{\beta \in B_{\beta_k}} \sum_{\beta'\in B_{\beta_\ell}} u^{|\beta-\beta'|}\le u^{|\beta_k-\beta_\ell|}
\sum_{\beta\in B_{\beta_k}} 2^6\prod_{i \in \supp\beta_\ell, i\le 6}(1-u_i)^{-1}\prod_{j \in \supp\beta_\ell, j>6}(1-2u_j)^{-1} .
\]
Here we used the fact that $u_{i}<1/2$ for $i>6$ and that any given value for $|\beta-\beta'|$ can be obtained by $2^{m}$ different
multi-indices $\beta'$, when $\beta$ is fixed; here $m$ denotes the value of $|\supp\beta'|$ which must be the same for all these $\beta'$.
Since $B'$ is divisor closed, we have $|\supp{\beta_\ell}|\le\log N/\log 2$, and we get
\be
\label{mu1}
\sum_{\beta \in B_{\beta_k}} \sum_{\beta'\in B_{\beta_\ell}} u^{|\beta-\beta'|}\le u^{|\beta_j-\beta_k|} 2^6 |B_{\beta_k}|\exp\left(K\sqrt{\frac{\log N}{\log_2 N}}\right) 
\ee
for an absolute constant $K$.
Summing \eqref{mu1} over $k$ and $\ell$ and recalling that $|B_{\beta_k}|\le |B_{\beta_\ell}|$, we obtain
\[ S(u,B)\le 2^6 \exp\left(K\sqrt{\frac{\log N}{\log_2 N}}\right) \sum_{k,\ell =1}^M |B_{\beta_k}|^{1/2}|B_{\beta_\ell}|^{1/2}u^{|\beta_k-\beta_\ell|}.\]
Now Corollary~\ref{sfceigen} applies because the set $\{\beta_1,...,\beta_M\}$ is square-free. Using also the prime number theorem, we therefore
get 
\begin{eqnarray*}  S(u,B)&\le & 2^6 \exp\left(K\sqrt{\frac{\log N}{\log_2 N}}\right) \Lambda^{\text{sf}}_{u}(M) \sum_{k=1}^M |B_{\beta_k}| \\
 & \le & 2^6 C_{\varepsilon} N\exp\left(K\sqrt{\frac{\log N}{\log_2 N}}+(\sqrt{2}+\varepsilon)\kappa\sqrt{\frac{\log N\log_3N}{\log_2N}}\right) 
\end{eqnarray*}
for every positive $\varepsilon$. Returning to \eqref{Gd} and recalling that $\kappa$ can be chosen as any number larger than $2\sqrt{6}$,
we finally obtain
\[  \Gamma_t(N) \ll \exp\left(A\sqrt{\frac{\log N\log_3N}{\log_2N}}\right) \]
with $A<7$ because $2\sqrt{12}<7$. 
\end{proof}


\section{Concluding remarks}\label{concluding}

\noindent 1. We may deduce from Theorem~\ref{maintheorem} an upper bound for the spectral norms of the GCD matrices $(t^{|\beta_j-\beta_{\ell}|})$. To this end, we define  
\be \label{lam} \Lambda_t(N):=\sup_{\beta_1,...,\beta_N} \sup_{c\neq 0} \frac{\sum_{k,\ell=1}^N t^{|\beta_k-\beta_{\ell}|} c_k \overline{c}_{\ell}}{\sum_{j=1}^N |c_j|^2}, \ee
where the suprema are taken over respectively all $N$-tuples of distinct multi-indices $\beta_1,...\beta_N$ and all nonzero vectors $c=(c_1,...,c_N)$ in $\C^N$.   Following Section~4 of \cite{ABS}, we obtain 
\be \label{our} \Lambda_{(p_j^{-1/2})}(N)\ll \exp\left(A\sqrt{\frac{\log N \log_3 N}{\log_2 N}}\right) \ee
for a constant $A$ less than 7.  Replacing Theorem 3.2 of \cite[p. 62]{H} by \eqref{our}, we obtain corresponding improvements of the results  in \cite[Chapter 3]{H} depending on that theorem. See also Hilberdink's paper \cite{Hi}, as mentioned in the introduction.\vspace{2mm}

\noindent 2. Note that the GCD sums seem indispensable for estimating $\Lambda_t(N)$ because of the central role played by the completeness property. Indeed, for the proof of Lemma~\ref{divisor} to work, it is of crucial importance that $c_k\equiv C$ in the quadratic sum on the right-hand side of \eqref{lam}. 
\vspace{2mm}
 
\noindent 3. The method of this paper applies without any essential change to the somewhat easier case $1/2 < \alpha < 1$.  It gives an improvement of the constants
$A_{\alpha}$ in the bound
\[ \Gamma_{(p_j^{-\alpha})}(N)\ll \exp\left(A_{\alpha} \frac{(\log N)^{1-\alpha}}{(\log_2 N)^{\alpha}}\right)\]
from \cite{ABS}. This estimate is optimal up to the precise value of $A_{\alpha}$. However, our method of proof does not apply to the case $0<\alpha <1/2$, except that  Theorem~\ref{maintheorem} along with H\"{o}lder's inequality yields the estimate
\[ \Gamma_{(p_j^{-\alpha})}(N)\ll N^{1-2\alpha} \exp\left(\alpha A \sqrt{\frac{\log N \log_3 N}{\log_2 N}}\right),\]
cf. the proof of \cite[Theorem~1]{ABS}.  It is tempting to conjecture that the second factor in this bound is an artifact and that the bound should be
just $N^{1-2\alpha}$ or at least much closer to $N^{1-2\alpha}$. Our two lemmas on the completeness property still apply, but otherwise new ideas seem to be needed to handle this case. Note that now we know the precise value of the leading term in the expression for $\log \Gamma_t(N)$:
\[ \log \Gamma_{(p_j^{-\alpha})}(N)=(1-2\alpha)\log N + \text{lower order term}. \]
The question is then if we are able to find the right order of growth of the lower order term. This is a problem of a rather different nature than that of finding the right order of growth of the main term when $1/2\le \alpha <1$.
\vspace{2mm}

\noindent 4. It remains an interesting problem to find the exact growth rate of $\log \Gamma_{(p_j^{-1/2})}(N)$.  From \cite{ABS} we know that we reach the lower bound in \eqref{lower} if we choose the set of cardinality $2^k$ consisting of all square-free numbers generated by the first $k$ primes.  We can not rule out the possibility that a more sophisticated example, still having the completeness property, could  be worked out to give a larger lower bound.

\section*{Acknowledgements} The authors are grateful to Christoph Aistleitner and Istv\'{a}n Berkes for pertinent and helpful remarks on the bibliography of this paper. They would also like to thank the anonymous referee for a careful review and for suggesting some essential clarifications of the exposition.

\end{document}